\renewcommand{\phi}{\varphi}
\newcommand{\C}{{\mathbb{C}}}
\newcommand{\N}{{\mathbb{N}}}
\newcommand{\R}{{\mathbb{R}}}
\newcommand{\Z}{{\mathbb{Z}}}
\newcommand{\1}{{\mathbf{1}}}
\renewcommand{\epsilon}{\varepsilon}
\renewcommand{\theta}{\vartheta}
\renewcommand{\S}{{\mathbb{S}}}
\newcommand{\T}{{\mathbb{T}}}
\newcommand{\lie}[1]{{\mathcal{L}_{#1}}}
\newcommand{\pairing}[2]{{\langle{#1}|{#2}\rangle}}
\newcommand{\norm}[1]{{\lVert #1\rVert}}
\newcommand{\abs}[1]{{\left\lvert #1\right\rvert}}
\newcommand{\z}{{\mathbf{z}}}
\DeclareMathOperator{\Ad}{Ad}
\DeclareMathOperator{\Crit}{Crit}
\newcommand{\g}{{\mathfrak{g}}}
\DeclareMathOperator{\Hessian}{Hess}
\DeclareMathOperator{\rank}{rank}
\DeclareMathOperator{\stab}{Stab}
\theoremstyle{plain}
\newtheorem{theorem}{Theorem}
\newtheorem{proposition}[theorem]{Proposition}
\newtheorem{corollary}[theorem]{Corollary}
\newtheorem{lemma}{Lemma}
\theoremstyle{remark}
\theoremstyle{definition}
\newtheorem*{defi}{Definition}
\numberwithin{equation}{section}
\begin{document}

\title[Desingularisation of symplectic orbifolds]{Desingularisation of
  orbifolds obtained from symplectic reduction at generic coadjoint
  orbits}

\author[K.~Niederkrüger]{Klaus Niederkrüger}
\author[F.~Pasquotto]{Federica Pasquotto}

\email[K.\ Niederkrüger]{kniederk@umpa.ens-lyon.fr}
\email[F.~Pasquotto]{pasquott@few.vu.nl}

\address[K.\ Niederkrüger]{École Normale Supérieure de Lyon\\
  Unité de Mathématiques Pures et Appliquées\\
  UMR CNRS 5669\\
  France}

\address[F.~Pasquotto]{Department of Mathematics, Faculty of Sciences\\
  Vrije Universiteit\\De Boelelaan 1081a\\1081 HV Amsterdam\\
  The Netherlands}

\begin{abstract}
  We show how to construct a resolution of symplectic orbifolds
  obtained as quotients of presymplectic manifolds with a torus
  action.  As a corollary, this allows us to desingularise generic 
  symplectic
  quotients. Given a manifold with a Hamiltonian action of a compact
  Lie group, symplectic reduction at a coadjoint orbit which is
  transverse to the moment map produces a symplectic orbifold. If
  moreover the points of this coadjoint orbit are regular elements of
  the Lie coalgebra, that is, their stabiliser is a maximal torus, the
  result for torus quotients may be applied to obtain a
  desingularisation of these symplectic orbifolds.  Regular elements
  of the Lie coalgebra are generic in the sense that the singular
  strata have codimension at least three.
  
  Additionally, we show that even though the result of a symplectic cut
  is an orbifold, it can be modified in an arbitrarily small neighbourhood 
  of the cut hypersurface to obtain a smooth symplectic manifold.
 
\end{abstract}

\maketitle

\section{Introduction}

Let $G$ be a compact Lie group, and let $(W,\omega)$ be a Hamiltonian
$G$--manifold with moment map $\mu:\,W\to \g^*$. If $\mu$ is
transverse to the coadjoint orbit $\Ad(G)^*\nu$, then the preimage
$\mathcal{O}(\Ad(G)^*\nu) := \mu^{-1}(\Ad(G)^*\nu)$ of the coadjoint
orbit is a smooth $G$--invariant submanifold.  It also follows that
every element on the coadjoint orbit $\Ad(G)^*\nu$ is a regular value
of the moment map $\mu$, and so the $G$--action is locally free,
because for $X\in\g$ the infinitesimal generator $X_W$ satisfies the
equation $i_{X_W}\omega = \pairing{d\mu}{X}$, so that $X_W$ cannot
vanish at regular points of the moment map $\mu$.  The quotient
\begin{equation*}
  \mathcal{O}(\Ad(G)^* \nu)/\!/G :=  \mu^{-1}(\Ad(G)^*\nu)/G
\end{equation*}
is an orbifold.  The easiest way to define the canonical symplectic
structure on $\mathcal{O}(\Ad(G)^* \nu)/\!/G$ is by using the
isomorphism
\begin{equation}\label{eqn: quotientiso}
  \mathcal{O}(\Ad(G)^* \nu)/\!/G \cong \mu^{-1}(\nu)/\stab(\nu) \;.
\end{equation}
The $2$--form $\omega$ descends to a non-degenerate form on the
quotient on the right hand side, because the tangent space to the 
$\stab(\nu)$--orbits spans
the kernel of $\omega|_{T\mu^{-1}(\nu)}$.

In this paper we prove the following result, which is a generalisation
of the result in \cite{NiederkruegerPasquottoCyclic}.

\begin{theorem}\label{main}
  Let $(M,\omega)$ be a symplectic orbifold arising as the quotient of
  a presymplectic $\T^k$--manifold by the given torus action and let
  $X$ be the set of orbifold singularities of $M$.  Assume that $X$ is
  compact: then for any neighbourhood $U$ of $X$ there exists a
  symplectic resolution of $M$ supported on~$U$.
\end{theorem}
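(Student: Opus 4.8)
The plan is to reduce the global statement to a local toric model near the singular set and then to resolve that model by an equivariant symplectic modification that can be confined to an arbitrarily small neighbourhood. First I would pass to the presymplectic manifold $(W,\omega)$ with $M = W/\T^k$, and lift $X$ to the closed, $\T^k$--invariant subset $\tilde X\subset W$ consisting of the points whose stabiliser under the torus action is a nontrivial finite group. Since the kernel of $\omega$ is everywhere tangent to the orbits, a presymplectic equivariant slice theorem should give, near an orbit in $\tilde X$ with stabiliser $\Gamma$, a normal form in which $W$ is modelled equivariantly on $\T^k\times_\Gamma V$, where $V$ is a symplectic vector space carrying a linear symplectic $\Gamma$--action; correspondingly $M$ is modelled on the symplectic finite quotient $V/\Gamma$. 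Because $X$ is compact, finitely many such charts, all contained in $U$, cover a neighbourhood of $X$.

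Next I would resolve the local model $V/\Gamma$ by toric methods. After diagonalising the unitary $\Gamma$--action, $V/\Gamma$ is an affine toric orbifold described by a rational simplicial cone whose primitive generators fail to span the lattice; choosing a unimodular refinement of the associated fan yields a toric resolution. I would realise this resolution symplectically and $\T^k$--equivariantly, by an iterated sequence of symplectic cuts (in the spirit of Lerman's symplectic cutting) or weighted blow--ups, performed in an invariant neighbourhood of the singular orbit that is as small as desired. The crucial check is that after the modification the kernel of the presymplectic form remains tangent to the torus orbits and that the torus now acts freely, so that the quotient is again a smooth symplectic manifold; this is exactly the step carried out for cyclic stabilisers in \cite{NiederkruegerPasquottoCyclic}, and the genuinely new content is the combinatorics needed for a general finite abelian $\Gamma$.

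Finally I would globalise. Away from $X$ the torus action is free and $M$ is already smooth, so no modification is needed there; the local resolutions, each supported in a preassigned small set inside $U$, then glue to a symplectic resolution $\pi:\tilde M\to M$ that is a symplectomorphism outside $U$, that is, supported on $U$ in the required sense.

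The main obstacle I anticipate is producing the toric resolution in the symplectic category while simultaneously (i) keeping it $\T^k$--equivariant and compatible with the characteristic foliation $\ker\omega$, (ii) confining it to the prescribed neighbourhood so that the support condition holds, and (iii) ensuring that the choices made on overlapping charts and along the different singular strata are coherent, so that the glued object is a genuine manifold rather than a space with residual orbifold points. Handling a general finite abelian $\Gamma$---rather than the cyclic groups of the previous paper---is precisely what makes the fan refinement and the corresponding sequence of cuts delicate, and I expect this to be where the bulk of the technical work lies.
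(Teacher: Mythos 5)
Your overall framing is right---pass to the presymplectic $\T^k$--manifold, note that stabilisers are finite (abelian) subgroups of the torus, work equivariantly in small invariant neighbourhoods so the support condition holds---but the step you defer as obstacle (iii) is not a technicality: it is the crux, and your proposal does not contain the idea needed to close it. Local toric resolutions of $V/\Gamma$ are highly non-canonical (a unimodular refinement of a simplicial fan involves genuine choices), and your charts are slice neighbourhoods of single orbits. On the overlap of two charts centred at different orbits, and, worse, where the closure of one isotropy stratum meets a stratum with larger stabiliser, the two local fan refinements must be identified compatibly under chart transitions that are only defined up to the local groups; nothing in your construction makes these choices coherent, and ``finitely many charts, then glue'' does not go through as stated. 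Without a canonical, stratum-by-stratum global recipe you may end up with residual orbifold points or with no well-defined glued space at all.

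The paper's proof is organised precisely to avoid chart-by-chart choices, and it also sidesteps the general toric combinatorics you anticipate. First, it never confronts a general finite abelian $\Gamma$ directly: writing $\T^k=\S^1\times\T^{k-1}$ and iterating Lemma~\ref{equivariant_circle_resolution}, it reduces everything to a locally free \emph{circle} action, whose isotropy groups are cyclic $\Z_k$---so the only ``combinatorics'' is induction on the maximal order $k$. Second, within each circle step the modification is \emph{global along the whole minimal stratum} $P_k$, not chart-by-chart: the eigenbundle splitting $\nu_k=E_1\oplus\dotsb\oplus E_l$ of the normal bundle is canonically defined, hence the auxiliary circle action $\phi$, the averaged form, the Morse--Bott Hamiltonian $\mu_\phi$ (Proposition~\ref{propo: Morse Bott}), and the surgery on $\nu_k\times\C$ via the free $\hat\tau$--action are all global objects; gluing happens only along a single collar $\nu_k(\epsilon,\delta)$ by the explicit equivariant presymplectic map $\Phi$, where compatibility is checked once, directly. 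Your instinct that the modification is a Lerman-style cut is correct (the paper's surgery is exactly a presymplectic adaptation of the symplectic cut), and your concern about keeping $\ker\omega$ tangent to the orbits is the content of the kernel computation on the level set $H_{\hat\tau}^{-1}(\epsilon/k)$; but to turn your sketch into a proof you would either have to supply a canonical equivariant fan refinement compatible across overlapping slices and nested strata, or restructure the argument as the paper does---one circle factor at a time, one closed stratum at a time, with all choices made globally.
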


\begin{defi}
  An element $\nu$ in the Lie coalgebra $\g^*$ is called
  \textbf{regular} if its stabiliser with respect to the coadjoint
  action is a maximal torus.
\end{defi}

In fact most elements of the Lie coalgebra are regular: their
complement consists of the union of finitely many subspaces of $\g^*$,
the singular strata, which have codimension at least $3$.  Using the
isomorphism \eqref{eqn: quotientiso} we can apply the theorem to a
symplectic reduction at a generic element $\nu\in\g^*$ to obtain the
following

\begin{corollary}
  Let $G$ be a connected compact Lie group, and let $(W,\omega)$ be a
  Hamiltonian $G$--manifold with moment map $\mu$.  Choose an element
  $\nu\in\g^*$ that is a regular value of $\mu$ and a regular element
  of the Lie coalgebra.  The reduced space $M := \mathcal{O}(\Ad(G)^*
  \nu)/\!/G$ is a symplectic orbifold and, provided that the set $X$
  of orbifold singularities is compact, it admits a symplectic
  resolution supported on an arbitrarily small neighbourhood of $X$.
\end{corollary}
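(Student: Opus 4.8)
The plan is to recognise the reduced space $M$ as exactly a torus quotient of the type covered by Theorem \ref{main}, and then simply to invoke that theorem; all the genuine difficulty has already been packaged into Theorem \ref{main}, so the task here is one of checking hypotheses via the isomorphism \eqref{eqn: quotientiso}.

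First I would set up the manifold and the torus. Since $\nu$ is a regular value of $\mu$, the fibre $N := \mu^{-1}(\nu)$ is a closed embedded submanifold of $W$. Equivariance of the moment map, $\mu(g\cdot x) = \Ad(g)^*\mu(x)$, shows that the subgroup of $G$ preserving $N$ is precisely $T := \stab(\nu)$; and the assumption that $\nu$ is a regular element of $\g^*$ says exactly that $T$ is a maximal torus, so $T \cong \T^k$ for some $k$. Thus $N$ carries an action of the torus $\T^k$, and \eqref{eqn: quotientiso} identifies $M$ with the quotient $N/T$.

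Next I would verify that $(N,\omega|_N)$ is a presymplectic $\T^k$--manifold in the sense required by Theorem \ref{main}. Writing $\iota\colon N\hookrightarrow W$ for the inclusion, the form $\iota^*\omega$ is closed; as recalled in the introduction, its kernel at each point is spanned by the tangent space to the $T$--orbit, since for $X$ in the Lie algebra $\mathfrak{t}$ of $T$ the relation $i_{X_W}\omega = \pairing{d\mu}{X}$ forces $X_W$ to be non-zero along $N$ and to lie in this kernel. Consequently $\iota^*\omega$ has constant rank, so it is presymplectic, and the $T$--action is locally free: the $G$--stabiliser of a point of $N$ is finite (the $G$--action along $\mathcal{O}(\Ad(G)^*\nu)$ is locally free, as noted above) and, fixing $\nu$, is contained in $T$, so the $T$--stabilisers are finite. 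Hence $N/T$ is a symplectic orbifold, and under \eqref{eqn: quotientiso} its symplectic structure is the canonical one on $M$.

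With $M$ now exhibited as the quotient of the presymplectic $\T^k$--manifold $N$ by the given torus action, Theorem \ref{main} applies directly: granting that the orbifold singular set $X$ is compact, for every neighbourhood $U$ of $X$ there exists a symplectic resolution of $M$ supported on $U$, which is the assertion of the corollary. The only step that deserves care, and the one place where both regularity hypotheses on $\nu$ are genuinely used, is the claim that $\ker(\iota^*\omega)$ coincides exactly with the orbit-tangent distribution: this is what simultaneously guarantees that $N$ is presymplectic with its kernel filled out by the torus orbits and that the induced form on $M$ is non-degenerate, and hence that we are really inside the scope of Theorem \ref{main}.
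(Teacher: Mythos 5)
Your proposal is correct and follows essentially the same route as the paper, which derives the corollary by using the isomorphism \eqref{eqn: quotientiso} to identify $M$ with $\mu^{-1}(\nu)/\stab(\nu)$, noting that regularity of $\nu$ makes $\stab(\nu)$ a maximal torus $\T^k$ acting locally freely on the presymplectic fibre $\mu^{-1}(\nu)$, and then invoking Theorem~\ref{main}. In fact you supply slightly more detail than the paper, which simply asserts in the introduction that the $\stab(\nu)$--orbit directions span the kernel of $\omega|_{T\mu^{-1}(\nu)}$ and applies the theorem without further comment.
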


\subsubsection*{Acknowledgements}
K.\ Niederkrüger is working at the \emph{ENS de Lyon}, where he is
being funded by the Agence Nationale de la Recherche (ANR) project
\emph{Symplexe}.  F.\ Pasquotto is working at the \emph{Vrije
  Universiteit Amsterdam} and is supported by VENI grant 639.031.620
of the Nederlandse Organisatie voor Wetenschappelijk Onderzoek (NWO).

We would like to thank the \textit{Centre International de Rencontres
  Mathématiques} at Luminy for their hospitality in the summer of
2008, while we were working on the main ideas for this paper.  We are
also grateful to Eugene Lerman for his patience in answering our
questions and for asking in turn more questions that stimulated us to
think further.

\section{Presymplectic manifolds and symplectic orbifolds}

In this section, $G$ will denote a compact connected Lie group and
$\g$ its Lie algebra.  If $G$ acts on a manifold $P$, then there
exists a homomorphism
\begin{equation*}
  \g\to \mathfrak{X}(P)\;,\quad X\mapsto X_P(p)= 
  \left.\frac{d}{dt}\right|_{t=0}\exp(tX)*p\;.
\end{equation*}
The vector $X_P$ is called the \textbf{infinitesimal action} of
$X\in\g$.

\begin{defi}
  Let $P$ be a $(2n+k)$--dimensional smooth manifold and assume it
  admits a locally free $G$--action and a closed $2$--form $\omega_P$
  such that:
  \begin{itemize}
  \item[(i)] $\dim G = k$;
  \item[(ii)] $\omega_P^n\neq 0$;
  \item[(iii)] the infinitesimal action of $\g$ spans the kernel of
    $\omega_P$.
  \end{itemize}
  We call $P$ a \textbf{presymplectic $G$--manifold}.
\end{defi}

\begin{defi}
  A \textbf{symplectic orbifold} $M$ is a Hausdorff, second countable
  topological space, equipped with an atlas of uniformizing charts
  $(\widetilde{U}_i,\Gamma_i, \phi_i, \omega_i)$, where
  $\widetilde{U}_i$ is an open connected subset of $\R^{2n}$,
  $\Gamma_i$ is a finite group of symplectomorphisms of
  $(\widetilde{U}_i, \omega_i)$ and $\phi_i:\, \widetilde{U}_i \to M$
  induces a homeomorphism from $\widetilde{U}_i/\Gamma_i$ to $U_i
  \subset M$.  These charts are required to cover $M$ and to satisfy
  the following compatibility condition: if $x\in \widetilde{U}_i$ and
  $y\in \widetilde{U}_j$ are such that $\phi_i(x)=\phi_j(y)$ then
  there exists a symplectomorphism from a neighbourhood of $x$ onto a
  neighbourhood of $y$ whose composition with $\phi_j$ is $\phi_i$.
\end{defi}

\begin{defi}
  Let $M_1$ and $M_2$ be orbifolds with uniformizing charts
  $\bigl\{(\widetilde{U}_i,\Gamma_i, \phi_i)\bigr\}$ and
  $\bigl\{(\widetilde{V}_j,\Lambda_j, \psi_j)\bigr\}$, respectively.
  A \textbf{smooth orbifold map} $f:\, M_1 \to M_2$ is a continuous
  map such that if $x \in\widetilde{U}_i$ and $y\in\widetilde{V}_j$
  are such that $f\bigl(\phi_i(x)\bigr) = \psi_j(y)$, then there
  exists a smooth map $\widetilde f$ from a neighbourhood of $x$ to a
  neighbourhood of $y$ such that $\psi_j\circ \widetilde f = f\circ
  \phi_i$.
\end{defi}

A $G$--orbifold $M$ is a smooth orbifold map $G\times M \to M$ with
the usual properties of an action.  The definition of presymplectic
$G$--orbifold is also analogous to the corresponding one for
manifolds.

\begin{lemma}\label{quotient_by_a_product}
  Let $P$ be a presymplectic $G_1\times G_2$--manifold. Then $P/G_1$
  is a presymplectic $G_2$--orbifold.  Moreover, $P/(G_1\times G_2)$
  and $(P/G_1)/G_2$ are isomorphic as symplectic orbifolds.
\end{lemma}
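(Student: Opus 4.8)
The plan is to establish the lemma in two stages, corresponding to its two assertions. First I would show that $P/G_1$ is a presymplectic $G_2$--orbifold, and then that the two iterated quotients agree with the simultaneous quotient as symplectic orbifolds.

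\textbf{Stage one: $P/G_1$ is a presymplectic $G_2$--orbifold.} Since $P$ is a presymplectic $G_1\times G_2$--manifold, the action of the subgroup $G_1$ is locally free, so the quotient $P/G_1$ carries a natural orbifold structure, with uniformizing charts obtained from slices for the $G_1$--action: near a point $p$ with finite stabiliser $\Gamma\subset G_1$, a slice $\widetilde U$ transverse to the $G_1$--orbit gives a chart $(\widetilde U,\Gamma,\phi)$. I would first verify that $\dim(P/G_1)=\dim P-\dim G_1=2n+\dim G_2$, consistent with condition (i) for a presymplectic $G_2$--orbifold of the appropriate dimension. Next, the closed $2$--form $\omega_P$ descends: because the infinitesimal action of $\g=\g_1\oplus\g_2$ spans $\ker\omega_P$, in particular $\omega_P$ is $G_1$--invariant and satisfies $i_{X_P}\omega_P=0$ for every $X\in\g_1$, which are precisely the two conditions (invariance and horizontality) guaranteeing that $\omega_P$ is basic, hence descends to a closed $2$--form $\bar\omega$ on $P/G_1$ (equivalently, on each chart $\widetilde U$ the pullback of $\bar\omega$ equals $\omega_P|_{\widetilde U}$, and $\Gamma$ acts by symplectomorphisms). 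I would then check the three presymplectic conditions on $(P/G_1,\bar\omega)$ with respect to the residual $G_2$--action: the $G_2$--action on $P/G_1$ is again locally free since the $G_1\times G_2$--action was; condition (ii), $\bar\omega^n\neq 0$, follows because the kernel of $\omega_P$ is exactly the span of the infinitesimal $(\g_1\oplus\g_2)$--action, so after quotienting by $G_1$ the remaining kernel is spanned by the $\g_2$--directions, leaving $\bar\omega$ of rank $2n$; and condition (iii) for $G_2$ is the restriction of condition (iii) for $G_1\times G_2$.

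\textbf{Stage two: $P/(G_1\times G_2)\cong(P/G_1)/G_2$ as symplectic orbifolds.} Both quotients have underlying topological space the set of $(G_1\times G_2)$--orbits, so the natural bijection between them is a homeomorphism; I would check that the uniformizing charts and descended symplectic forms match. Quotienting $P/G_1$ further by the locally free $G_2$--action produces a symplectic orbifold (here $G_2$ acts on a presymplectic orbifold of rank $2n$, so the quotient is genuinely symplectic, the form becoming non-degenerate once the last $\g_2$--kernel directions are collapsed). The form on $(P/G_1)/G_2$ is obtained by descending $\bar\omega$, which in turn came from $\omega_P$; on the other side, the form on $P/(G_1\times G_2)$ is the direct descent of $\omega_P$. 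Since descent through a composite quotient can be performed in either order and yields the same basic form, the two symplectic structures agree under the identification, and one checks the compatibility of charts by noting that a slice for the full $G_1\times G_2$--action can be taken as an iterated slice.

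\textbf{Main obstacle.} The routine parts are the dimension count and the verification of conditions (i) and (iii). The delicate point is organising the orbifold bookkeeping cleanly: the stabilisers for the $G_1\times G_2$--action need not split as products, so when comparing charts I must keep careful track of how the finite uniformizing groups $\Gamma$ assemble under the two orders of quotienting, and confirm that the local models (slices) and the finite groups acting on them genuinely coincide. Establishing that $\bar\omega$ is well defined and basic, and that its rank is exactly $2n$ so that the second quotient is symplectic rather than merely presymplectic, is where the presymplectic hypothesis (iii) does the essential work; I expect the verification that the iterated descent and the simultaneous descent produce \emph{the same} symplectic form to require the most care, even though each individual step is conceptually standard.
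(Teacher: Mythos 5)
Your proposal is correct and follows essentially the same route as the paper, which disposes of this lemma in one line by declaring it analogous to the case of free actions on smooth manifolds, with the symplectic orbifold slice theorem of Lerman--Tolman supplying the local models. Your outline --- slices for the locally free $G_1$--action giving uniformizing charts, descent of $\omega_P$ as a basic form (its $G_1$--invariance indeed follows from Cartan's formula, closedness, and condition (iii), since the groups are connected), the rank count making the final quotient symplectic, and the iterated-slice comparison of charts where you rightly flag that stabilisers need not split as products --- is exactly the expansion of that argument which the paper leaves to the reader.
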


The proof of the lemma above is analogous to the case of smooth manifolds
with free group actions. In our situation, though, one has to use the 
symplectic orbifold slice theorem, as stated in \cite{LermanTolman}.

\subsection{Symplectic resolutions}

\begin{defi}
  Let $M$ be an orbifold with singular set $X$.  A \textbf{resolution
    of $M$} consists of a smooth manifold $\widetilde{M}$ and a
  continuous surjective map $p:\, \widetilde{M}\to M$ which is a
  diffeomorphism on the complement of the singular set.

  If $(M, \omega)$ is a symplectic orbifold (presymplectic
  $G$--orbifold) and $U$ is a neighbourhood of the singular set $X$,
  then a \textbf{symplectic (presymplectic) resolution of $M$
    supported on $U$} consists of a smooth symplectic (presymplectic
  $G$--) manifold $(\widetilde{M}, \widetilde{\omega})$ and a
  ($G$--equivariant) resolution $p:\, \widetilde{M}\to M$ such that
  $p^*\omega=\widetilde{\omega}$ on the complement of $U$.
\end{defi}

\section{Construction of the resolution}\label{Sec:Construction}

\subsection{Strategy}

Let $M$ be a symplectic orbifold arising as the quotient of a
presymplectic $\T^k$--manifold $P$ by the given group action, and let
$U$ be a neighbourhood of the set of orbifold singularities of $M$. If
we split off an $\S^1$--factor from the $k$--dimensional torus and
view $M$ as the quotient of a presymplectic
$\S^1\times\T^{k-1}$--manifold, by Lemma~\ref{quotient_by_a_product}
we have that $P/\S^1$ is a presymplectic $\T^{k-1}$--orbifold and
$(P/\S^1)/\T^{k-1}\cong M$.  If the $\S^1$--action on $P$ happens to
be free, then $P/\S^1$ is a smooth presymplectic $\T^{k-1}$--manifold
and we can split off another $\S^1$--factor. If the action is only
locally free, in Lemma~\ref{equivariant_circle_resolution} below we
show that one can construct a presymplectic resolution $P_1$ of
$P/\S^1$ (with $\T^{k-1}$--action) supported on an arbitrarily small
neighbourhood $U_1$ of the set of singular points of $P/\S^1$. In
particular, we can choose $U_1$ in such a way that its image under the
$\T^{k-1}$--action is contained in $U$.  From this it follows that the
map $p_1:\, P_1\to P/\S^1$ induces a surjective map $[p_1]:\,
P_1/\T^{k-1}\to (P/\S^1)/\T^{k-1} \cong M$ which is a diffeomorphism
outside the set of singular points of $M$ and a symplectomorphism
outside~$U$.

If we iterate this step (see Fig.~\ref{fig: resolution_scheme}), we
get a sequence of manifolds $P_1,\dotsc,P_k$ such that:
\begin{itemize}
\item[(i)] $P_i$ is a presymplectic $\T^{k-i}$--manifold;
\item[(ii)] $p_i:\, P_i\to P_{i-1}/\S^1$ is a presymplectic resolution
  supported on $U_i$ with an action of $\T^{k-i}$ such that
  $U_i/\T^{k-i} \subset ([p_1]\circ \dotsm \circ [p_{i-1}])^{-1} (U)$;
\item[(iii)] the induced maps $[p_i]:\, P_i/\T^{k-i}\to
  P_{i-1}/\T^{k-i+1}$ can be composed to obtain a symplectic
  resolution $[p] = [p_1]\circ[p_2]\circ\dotsm\circ [p_k]:\,
  \widetilde{M}=P_k\to M=P/\T^k$ of $M$ supported on $U$.
\end{itemize} 

\begin{figure}
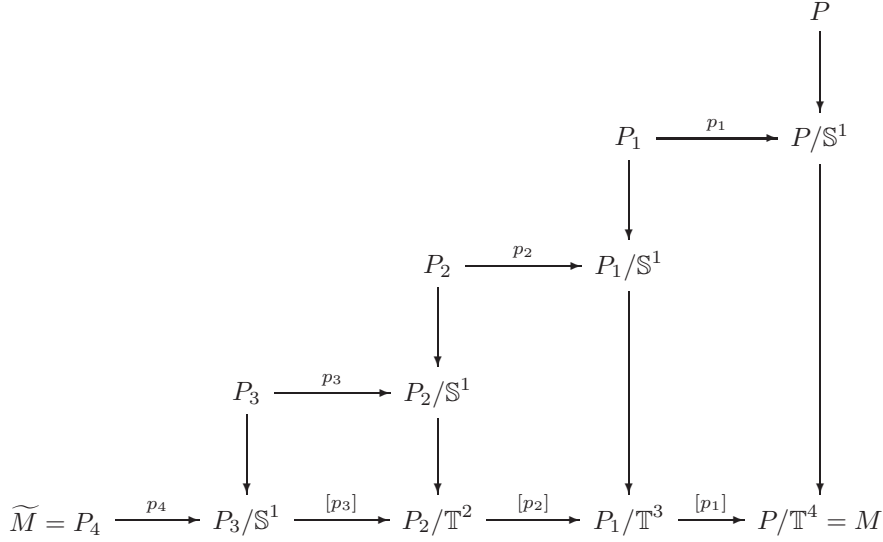

  \centering
  \begin{equation*}
    \begin{diagram} 
      \node[5]{P} \arrow{s} \\
      \node[4]{P_1} \arrow{e,t}{p_1} \arrow{s} \node{P/\S^1} \arrow[3]{s} \\
      \node[3]{P_2} \arrow{e,t}{p_2} \arrow{s} \node{P_1/\S^1} \arrow[2]{s} \\
      \node[2]{P_3} \arrow{e,t}{p_3} \arrow{s} \node{P_2/\S^1} \arrow{s} \\
      \node{\widetilde{M} = P_4} \arrow{e,t}{p_4} \node{P_3/\S^1}
      \arrow{e,t}{[p_3]} \node{P_2/\T^{2}} \arrow{e,t}{[p_2]}
      \node{P_1/\T^{3}} \arrow{e,t}{[p_1]} \node{P/\T^4 = M}
    \end{diagram}
  \end{equation*}
  
  \caption{Resolution scheme applied to a presymplectic
    $\T^4$--manifold $(P,\omega)$.}
  \label{fig: resolution_scheme}
\end{figure}
\begin{lemma}\label{equivariant_circle_resolution}
  Let $(P,\omega)$ be a presymplectic $\S^1\times G$--manifold.  Let
  $U$ be a neighbourhood of the set of singular points for the
  $\S^1$--action.  Then we find a presymplectic $\S^1\times
  G$--manifold $(\widetilde P,\widetilde\omega)$ such that the
  $\S^1$--action is free, and such that there is a map $p:\,\widetilde
  P/\S^1 \to P/\S^1$ that is a $G$--equivariant presymplectic
  resolution of $P/\S^1$ supported on $U$.
\end{lemma}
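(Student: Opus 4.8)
The plan is to localise the problem near the singular set and reduce it, $G$--equivariantly, to the cyclic model treated in \cite{NiederkruegerPasquottoCyclic}. Since the infinitesimal $\S^1$--action lies in the kernel of $\omega$ and preserves it, $\omega$ is basic and descends to a presymplectic $G$--orbifold form $\bar\omega$ on $P/\S^1$, whose singular set is the image of the set $Z$ of points with nontrivial (finite cyclic) $\S^1$--stabiliser. Because $G$ commutes with $\S^1$ and is compact, $Z$ is a disjoint union of $\S^1\times G$--invariant submanifolds $Z_m$, one for each stabiliser order $m\ge 2$; I would treat them one at a time (say in decreasing order of $m$), so that it suffices to free the action and resolve the quotient near a single stratum $Z=Z_m$.

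First I would produce a normal form. Applying the orbifold slice theorem of \cite{LermanTolman} to the commuting $\S^1\times G$--action, a $G$--invariant tubular neighbourhood of $Z$ is $\S^1\times G$--equivariantly modelled on $\S^1\times_{\Z/m} E$, where $E\to Z/\S^1$ is the symplectic normal bundle, the fibre is a complex $\Z/m$--representation $\C^\ell$ with no nonzero fixed vector, $\S^1$ acts only on the first factor and $G$ acts on the base. Here $\bigl(\S^1\times_{\Z/m}E\bigr)/\S^1\cong E/(\Z/m)$ is a bundle of cyclic quotient singularities $\C^\ell/(\Z/m)$. An equivariant Moser argument then lets me assume that $\bar\omega$ agrees with the standard, fibrewise linear, presymplectic form in this chart, after shrinking the neighbourhood inside $U$.

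The heart of the construction is the local resolution of this model, and this is the step I expect to be the main obstacle. An ordinary blow--up of $Z$ does not suffice: the exceptional directions along the eigenlines of $\Z/m$ retain the stabiliser, so the $\S^1$--action stays non-free. Instead I would take a Hirzebruch--Jung / toric resolution $\rho\colon \widetilde E\to E/(\Z/m)$ of the cyclic singularity, fibrewise over $Z/\S^1$ and compatibly with the residual torus symmetry, chosen so that the orbifold structure is genuinely removed. Over its complement the Seifert fibration $\S^1\times_{\Z/m}(E\setminus 0)\to (E\setminus 0)/(\Z/m)$ is an honest principal $\S^1$--bundle, and the content of the cyclic case \cite{NiederkruegerPasquottoCyclic} is precisely that this bundle extends to a smooth principal $\S^1$--bundle $\widetilde P\to\widetilde E$ on which $\S^1$ acts freely; the $G$--action is carried along on the base. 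The presymplectic form is produced downstairs by the standard blow--up recipe, setting $\widetilde{\bar\omega}=\rho^*\bar\omega+\epsilon\,\tau$, with $\tau$ a $G$--invariant closed form positive on the exceptional directions and supported near the exceptional locus; for $\epsilon$ small this is presymplectic with kernel the $G$--directions, and $\widetilde\omega$ is its pullback to $\widetilde P$, which automatically has $\S^1\times G$ in its kernel and satisfies $\widetilde\omega^{\,n}\neq 0$.

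Finally I would glue: the modification is identical to the original outside the chosen neighbourhood, so patching $\widetilde P$ to $P$ away from $Z$ yields a global presymplectic $\S^1\times G$--manifold with free $\S^1$--action, and the induced map $p\colon\widetilde P/\S^1\to P/\S^1$ is a $G$--equivariant resolution which is a symplectomorphism outside the image of the neighbourhood, hence supported on $U$. The routine points to verify are the $G$--invariance of all choices (handled by averaging over the compact group $G$), the independence of the gluing from the local trivialisations, and the smallness of $\epsilon$; the genuinely delicate point, as noted above, is arranging in the toric model that the extended $\S^1$--bundle is smooth with a free action, which is exactly where the analysis of \cite{NiederkruegerPasquottoCyclic} is essential.
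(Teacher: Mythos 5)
There is a genuine gap, and it sits exactly at the point you flagged as delicate. You claim that ``over its complement the Seifert fibration $\S^1\times_{\Z/m}(E\setminus 0)\to (E\setminus 0)/(\Z/m)$ is an honest principal $\S^1$--bundle''. Your hypothesis on the slice representation only excludes nonzero vectors fixed by \emph{all} of $\Z_m$; proper subgroups generically do fix nonzero vectors. If $v$ lies in an eigenspace of weight $a$ with $\gcd(a,m)=d>1$, the point $[\lambda,v]$ has $\S^1$--stabiliser $\Z_d$, so the Seifert fibration is not principal off the zero section, and the lower-order singular strata of the $\S^1$--action run through every neighbourhood of $Z_m$ and exit through its boundary. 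This kills the one-shot strategy structurally, not just technically: a resolution supported on $U$ must agree with the identity near $\partial U$, where those lower strata are still present, so no surgery localised near $Z_m$ can make the quotient smooth (equivalently, the action free) on the whole neighbourhood --- a smooth resolved piece cannot be glued to the orbifold outside along a collar that still contains singular points. This is precisely why the paper does \emph{not} fully resolve in one step: it adapts Lerman's symplectic cut, using the canonical auxiliary circle action $\phi$ built from the eigenbundle splitting $E_1\oplus\dotsb\oplus E_l$ (with an extra weight $-k$ on a $\C$--factor) to excise \emph{only} the maximal-isotropy stratum $P_k$; the glued piece $V(\delta)$ deliberately retains lower-order singular points matching those crossing the boundary, and the surgery is then iterated to reduce the maximal isotropy order until the action is free. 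Your appeal to \cite{NiederkruegerPasquottoCyclic} does not repair this: the cyclic case treated there is the isolated-singularity situation, i.e.\ exactly the case where the finite group acts freely on the unit sphere of the slice and your principal-bundle claim happens to hold; it supplies no extension result in the general stratified setting.

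Two secondary problems are worth naming as well. First, a fibrewise Hirzebruch--Jung/toric resolution depends on a choice of fan subdivision; when eigenvalues of the $\Z_m$--representation repeat, the commutant contains nonabelian unitary blocks, and when $\ell>2$ there is no canonical choice, so your ``routine'' verification that the construction is independent of local trivialisations and $G$--equivariant is a genuine obstruction (short of invoking functorial resolution machinery). The paper sidesteps this entirely because the $\phi$--action, and hence the cut, is defined purely in terms of the splitting into \emph{distinct} eigenvalues, which is invariant under everything commuting with the actions. Second, your form $\rho^*\bar\omega+\epsilon\,\tau$ needs care: $\rho^*\bar\omega$ is maximally degenerate along the exceptional fibres, so $\tau$ must be symplectic there rather than merely ``positive on exceptional directions'', and matching the form on the gluing collar is nontrivial; the paper instead obtains the new form by presymplectic reduction on a level set of the Morse--Bott Hamiltonian $\mu_\phi$ (its Propositions~\ref{propo: presymplectic neighbourhood theorem} and~\ref{propo: Morse Bott}), which makes the collar identification $\Phi^*\Omega=\omega$ exact by construction.
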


The rest of Section~\ref{Sec:Construction} will be devoted to the
proof of this lemma.  For this we have to adapt the symplectic cut
\cite{Lerman_SymplecticCuts} to our presymplectic setting.  Roughly
speaking we construct an auxiliary circle action in a tubular
neighbourhood of the stratum of singularities with highest isotropy
group.  Then we cut out a smaller neighbourhood, and using the
auxiliary action we collapse the boundary of the cavity to reobtain a
closed manifold.  With this construction, we have removed the points
with highest isotropy groups, and then we can reapply the same steps
to reduce successively the order of the worst singular points, until
none are left.

\subsection{Reducing the order of the singularities of the
  $\S^1$--action}

Choose an $\S^1\times G$--invariant splitting of the tangent bundle,
\begin{equation*}
  TP = \ker\omega_P \oplus \Omega^P\;.
\end{equation*}
Since $\Omega^P$ is a symplectic subbundle, it admits an $\S^1\times
G$--invariant almost complex structure $J$ that is compatible with
$\left.\omega_P\right|_{\Omega^P}$ (see for example
\cite[Section~5.5]{McDuffSalamonIntro}).  Consider the metric on
$\Omega^P$ given by $g=\omega_P(J-,-)$, and extend it to an invariant
metric on $TP$ such that $\ker\omega_P \perp \Omega^P$.

Consider the stratification $\{P_k\}$ of the singular set of the
$\S^1$--action on $P$, with $P_k=\bigl\{p\in P \bigm|\,
\stab_{\S^1}(p)\cong \Z_k\bigr\}$.  If $k$ is maximal, that is,
$P_{k'}=\emptyset$ for $k'>k$, then $P_k$ is a closed, $\S^1\times
G$--invariant submanifold of $P$, of codimension at least~$2$.  By
restricting to one component, we may further assume $P_k$ to be
connected. A model of a neighbourhood of $P_k$ is given by a
neighbourhood of the zero section in the total space of its normal
bundle $\nu_k$.  The linearisation of the $\S^1\times G$--action on
$P$ defines an action on $\nu_k$ that is equivalent to the given one
on $P$ (in the sense that the exponential map defines an equivariant
diffeomorphism from a neighbourhood of the zero section in $\nu_k$ to
a neighbourhood of $P_k$ in~$P$).  Later it will be necessary to
introduce an auxiliary circle action.  To avoid confusions, from now
on we will call the given $\S^1$--action the $\beta$--action and we
will write it as $\lambda *_\beta v$ for $\lambda\in\S^1$
and~$v\in\nu_k$, whereas we will write $g*_G v$ for the $G$--action.

Let $x\in P_k$ be a point in the minimal stratum.  The
$\S^1$--stabiliser $\stab_{\S^1}(x)\cong\Z_k$ acts by isometric
$J$--linear transformations on $\Omega^P_x$, hence there exists an
isomorphism between the Hermitian vector spaces $(\Omega^P_x, J, g)$
and the standard Hermitian space $\C^n$ such that the linearised
$\Z_k$--action takes the form
\begin{equation}\label{normal_form}
  \lambda*_\beta \z = (\lambda^{\widetilde{a}_1}z_1,\dotsc,
  \lambda^{\widetilde{a}_n}z_n),
  \quad \lambda\in\Z_k, \; \z\in\C^n,
  \;\widetilde{a}_1,\dotsc,\widetilde{a}_n\in\Z\;.
\end{equation}
Without loss of generality we may assume that $0 = \widetilde{a}_1 =
\dotsb = \widetilde{a}_m < \widetilde{a}_{m+1}\leq \dotsb \le
\widetilde{a}_n < k$ for some $m$.  The first $m$ directions span the
space $\Omega^P_x\cap T_xP_k$, and since the others are orthogonal,
they coincide with the fibre $\nu_k(x)$.  In particular, it follows
that $\nu_k$ is a $J$--complex bundle with a fibrewise $J$--linear
$\Z_k$--action that commutes with the $\S^1\times G$--action.

Denote by $a_1<\dotsb <a_l$ the \emph{distinct} exponents occurring in
the normal form \eqref{normal_form} for the action: $\nu_k$ splits
thus into a direct sum of $\S^1\times G$--invariant subbundles
\begin{equation*}
  \nu_k = E_1\oplus\dotsb \oplus E_l \;,
\end{equation*}
where $E_i(x)$ denotes the eigenspace corresponding to the eigenvalue
$\lambda^{a_i}$ in the fibre at the point $x$.  The splitting is well
defined for each component of $P_k$.  This allows us to extend the
$\Z_k$--action to an auxiliary circle action $\phi$ by setting for any
$\lambda \in\S^1$
\begin{equation*}
  \lambda *_\phi v  := \lambda^{a_1}v_1+ \dotsb +\lambda^{a_l}v_l \;,
\end{equation*}
where $v=v_1+\dotsb +v_l$ is a splitting with respect to the
eigenspaces defined above.  This $\S^1$--action is fibrewise and
$J$--linear, and commutes with the original $\S^1\times G$--action.
The presymplectic form $\omega_P$ is $\phi$--invariant at points of
$P_k$, but unfortunately it does not need to be invariant at other
points. By averaging $\omega_P$ over the $\phi$--action, we obtain a
closed $2$--form $\omega$ which is invariant with respect to both the
$\S^1\times G$-- and the $\phi$--action, and such that the $\S^1\times
G$--orbits still lie in the kernel of $\omega$.  At points of $P_k$,
where $\omega_P$ was already $\phi$--invariant, we did not change it
by averaging, and so we have $\omega^n\neq0$.  It follows that there
is a small neighbourhood of the zero section, where $\omega$ will be
$\S^1\times G$--presymplectic.

\begin{proposition}\label{propo: presymplectic neighbourhood theorem}
  There exist neighbourhoods $U_1,U_2$ of $P_k$ in $\nu_k$ and an
  $\S^1\times G$--equivariant diffeomorphism $\Psi:\, U_1\to U_2$ such
  that
  \begin{equation*}
    \Psi^*\omega = \omega_P\;.
  \end{equation*}
\end{proposition}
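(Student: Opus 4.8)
The plan is to run an equivariant Moser deformation argument adapted to the presymplectic setting, interpolating between $\omega_P$ and the averaged form $\omega$.

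First I would record that $\tau := \omega - \omega_P$ is a closed, $\S^1\times G$--invariant $2$--form which vanishes along the zero section $P_k$, since $\omega$ was obtained from $\omega_P$ by averaging over the $\phi$--action and $\omega_P$ is already $\phi$--invariant on $P_k$. Identifying a neighbourhood of $P_k$ with a neighbourhood of the zero section in $\nu_k$, I would apply the fibrewise radial homotopy operator $Q$ (contraction with the Euler vector field of the fibres, integrated over the scalings $\rho_s$): because the linearised $\S^1\times G$--action is fibrewise linear it commutes with the scalings $\rho_s$, so $Q$ is equivariant and $\sigma := Q\tau$ is an $\S^1\times G$--invariant primitive, $d\sigma = \tau$, with $\sigma$ vanishing at every point of $P_k$. (Alternatively one averages an arbitrary primitive over the compact group $\S^1\times G$.)

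The key presymplectic point is that $\sigma$ automatically annihilates the common kernel $K := \ker\omega_P = \ker\omega$, which is the distribution spanned by the infinitesimal $\S^1\times G$--action. Indeed, for any generator $Y$ of this action one has $i_Y\tau = i_Y\omega - i_Y\omega_P = 0$, and invariance of $\sigma$ gives $\lie{Y}\sigma = 0$; hence $d\bigl(\sigma(Y)\bigr) = \lie{Y}\sigma - i_Y d\sigma = -i_Y\tau = 0$, so the function $\sigma(Y)$ is constant on the (connected) tubular neighbourhood. Since $\sigma$ vanishes on $P_k$, this constant is $0$, i.e.\ $\sigma(Y)\equiv 0$. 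Now set $\omega_t := \omega_P + t\tau$ for $t\in[0,1]$. Each $\omega_t$ has $K$ in its kernel, and on $P_k$ it equals $\omega_P$, so $\omega_t^n\neq 0$ there; by continuity $\omega_t^n\neq 0$ on a smaller neighbourhood, uniformly in $t$ by compactness of $[0,1]$, which forces $\ker\omega_t = K$, so every $\omega_t$ is $\S^1\times G$--presymplectic. Using the invariant splitting $TP = K\oplus\Omega^P$, the restriction $\omega_t|_{\Omega^P}$ is nondegenerate, and since $\sigma$ annihilates $K$ the equation $i_{X_t}\omega_t = -\sigma$ has a unique $\S^1\times G$--invariant solution $X_t\in\Omega^P$, which vanishes along $P_k$ because $\sigma$ does.

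Finally I would integrate $X_t$. As $X_t$ vanishes on $P_k$ and is invariant, its time--dependent flow $\Psi_t$ is defined for $t\in[0,1]$ on a possibly smaller $\S^1\times G$--invariant neighbourhood $U_1$ of $P_k$, fixes $P_k$, and is $\S^1\times G$--equivariant. Cartan's formula together with $d\omega_t=0$ gives
\[
  \frac{d}{dt}\Psi_t^*\omega_t = \Psi_t^*\bigl(\tau + d\,i_{X_t}\omega_t\bigr) = \Psi_t^*\bigl(\tau - d\sigma\bigr) = 0,
\]
so $\Psi_t^*\omega_t \equiv \omega_P$; taking $\Psi := \Psi_1$ and $U_2 := \Psi_1(U_1)$ yields $\Psi^*\omega = \omega_P$. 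The main obstacle is precisely the solvability of the Moser equation in the degenerate presymplectic situation: because $\omega_t$ is not invertible one must verify that the primitive $\sigma$ can be taken to annihilate the kernel $K$, which is what the invariance-plus-connectedness argument above secures; the remaining points (uniform presymplecticity of $\omega_t$ and existence of the flow on a neighbourhood) are routine once $P_k$ and its tubular neighbourhood are treated component by component.
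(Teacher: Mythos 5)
Your proposal is correct and follows essentially the same route as the paper's appendix: an equivariant Moser deformation along $\omega_t=(1-t)\,\omega_P+t\,\omega$, using an invariant primitive of $\omega-\omega_P$ that vanishes along $P_k$, observing via invariance plus connectedness that it annihilates the kernel spanned by the infinitesimal $\S^1\times G$--action, and solving the Moser equation uniquely in the symplectic complement $\Omega^P$ so the resulting vector field vanishes on $P_k$ and integrates to time one. The only (harmless) differences are cosmetic: you build the primitive with the fibrewise radial homotopy operator and deduce equivariance of $X_t$ from uniqueness, where the paper averages a primitive over the group and verifies $\lie{Y_P}X_s=0$ by direct computation.
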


The proof of this statement is a variation on the proof of the
analogous statement in the symplectic case, but for completeness we
have included it in the appendix.  This proposition shows that we can
pull-back the $\phi$--action to a neighbourhood of $P_k$ where it
gives us an auxiliary action for which the original presymplectic form
$\omega_P$ is invariant.  Equivalently we can work with the action
$\phi$ we have defined above and the averaged symplectic form
$\omega$. In this paper we choose to do the latter.

\begin{proposition}\label{propo: Morse Bott}
  There exists a neighbourhood $U$ of $P_k$ in $\nu_k$ and a non
  negative $\S^1\times G$--invariant Morse-Bott function $\mu_\phi:\,
  U \to \R$ such that
  \begin{itemize}
  \item $i_{X_\phi}\omega = d\mu_\phi$
  \item $\mu_\phi$ vanishes only on the zero section of $\nu_k$, and
    it is strictly increasing in radial fibre direction.
  \end{itemize}
\end{proposition}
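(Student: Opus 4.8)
The plan is to construct $\mu_\phi$ as a moment map for the auxiliary $\phi$--action and then to read off its local behaviour from the linear model on the fibres of $\nu_k$. Set $\alpha := i_{X_\phi}\omega$. Since $\omega$ is closed and $\phi$--invariant, Cartan's formula gives
$$0 = \lie{X_\phi}\omega = d\alpha + i_{X_\phi}(d\omega) = d\alpha,$$
so $\alpha$ is a closed $1$--form. Moreover $X_\phi$ vanishes along the zero section $P_k$, since the $\phi$--action fixes it pointwise, so $\alpha$ restricts to $0$ there. To produce the primitive I would apply the Poincaré homotopy operator along the fibrewise scaling $r_t\colon \nu_k\to\nu_k$, $r_t(v)=t\,v$, $t\in[0,1]$, which contracts a neighbourhood of the zero section onto $P_k$. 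Because $\alpha$ is closed and pulls back to $0$ on $P_k$, the homotopy formula yields $\alpha = d\mu_\phi$ with
$$\mu_\phi(v) = \int_0^1 \alpha_{tv}\bigl(\tfrac{\partial}{\partial t}r_t(v)\bigr)\,dt = \int_0^1 \omega_{tv}\bigl(X_\phi, v\bigr)\,dt,$$
where on the right $v$ denotes the vertical vector $\tfrac{d}{ds}\big|_{0}(tv+sv)$. This is the required moment map, $i_{X_\phi}\omega = d\mu_\phi$. Its $\S^1\times G$--invariance is immediate, as the scaling $r_t$ commutes with the fibrewise linear $\S^1\times G$--action and $\alpha$ is invariant, and $\mu_\phi$ vanishes on the zero section because the integrand vanishes at $v=0$.

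For the remaining properties I would analyse $\mu_\phi$ through the linear model on the fibres. Because the $\phi$--action is fibrewise linear, $X_\phi(tv)=t\,X_\phi(v)$, whence
$$\mu_\phi(v) = \int_0^1 t\,\omega_{tv}\bigl(X_\phi(v), v\bigr)\,dt.$$
At a point $x\in P_k$ the form $\omega_x$ restricts on $\nu_k(x)$ to the standard Hermitian form, and writing $v=v_1+\dots+v_l$ along the eigenbundles $E_i$ one computes $\omega_x(X_\phi(v),v)=\sum_i a_i\,g(v_i,v_i)$. Crucially every weight $a_i$ is \emph{strictly positive}: the zero--exponent directions in the normal form span $\Omega^P_x\cap T_xP_k$ and are therefore tangent to $P_k$, not contained in $\nu_k$. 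Hence $\mu_\phi(v)=\tfrac12\sum_i a_i\,g(v_i,v_i)+O(\abs{v}^3)$ near $P_k$, a quadratic form which is positive definite transverse to the zero section, so $\mu_\phi$ is Morse--Bott with critical set exactly $P_k$. Likewise, with $R$ the radial (Euler) field, $d\mu_\phi(R)=\alpha(R)=\omega(X_\phi,R)=\sum_i a_i\,g(v_i,v_i)+O(\abs{v}^3)$ is strictly positive off the zero section; this gives both the strict monotonicity in the radial fibre direction and, together with $\mu_\phi|_{P_k}=0$, the facts that $\mu_\phi\ge 0$ and vanishes only on $P_k$.

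The main obstacle is this last step: the positivity computations hold pointwise on $P_k$ and only to leading order, and one must promote them to genuine inequalities on an honest neighbourhood $U$. I would do this by noting that $\omega_{tv}\to\omega_x$ as $v\to 0$, uniformly on compact pieces of $P_k$, so that the strictly positive quadratic leading term dominates the higher--order remainder once $U$ is shrunk sufficiently; here one uses that $P_k$ is closed and, after restricting to a component, carries a suitably controlled neighbourhood, together with the strict positivity of all the weights $a_i$. Everything else---closedness, exactness, invariance and the moment map identity---is routine.
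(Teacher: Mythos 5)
Your proof is correct, and its geometric core coincides with the paper's: closedness of $i_{X_\phi}\omega$ via $\phi$--invariance, a primitive normalised to vanish on $P_k$, and positivity of $\omega(X_\phi,\partial_r)$ along the zero section, computed in both cases as $\sum_j a_j\,\omega(Jw_j,w_j)>0$ using the $\omega$--orthogonality of the eigenbundles and the strict positivity of the normal weights, then extended off the zero section by continuity and fibrewise scaling. Where you genuinely diverge is in two technical steps. For exactness the paper argues cohomologically: the class of $i_{X_\phi}\omega$ in $H^1(U)\cong H^1(P_k)$ vanishes because the form pulls back to zero on the zero section; you instead apply the Poincar\'e homotopy operator along $r_t(v)=tv$, which tacitly requires choosing $U$ fibrewise star-shaped (worth stating) but yields the explicit formula $\mu_\phi(v)=\int_0^1 t\,\omega_{tv}\bigl(X_\phi(v),v\bigr)\,dt$. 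That formula then does double duty: where the paper establishes the Morse--Bott property by restricting to a fibre, quoting McDuff--Salamon for the fact that the Hamiltonian of a linear circle action with nonzero weights is Morse, and exhibiting a nonsingular Hessian block of rank equal to $\rank\nu_k$, you read off the quadratic leading term $\tfrac12\sum_i a_i\,g(v_i,v_i)$ directly and conclude positive-definiteness transverse to $P_k$; combined with the standard inclusion $T_x\Crit(\mu_\phi)\subseteq\ker\Hessian_x(\mu_\phi)$ this gives the same equality of kernels the paper obtains. Your route buys a single explicit expression controlling the moment map, its Hessian, and its radial derivative simultaneously; the paper's route avoids remainder estimates by outsourcing the fibre analysis to a standard reference. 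The only point needing care is the one you flag yourself: the continuity argument, in your version as in the paper's, produces a possibly nonuniform neighbourhood when $P_k$ is noncompact, which is acceptable since the proposition asks only for \emph{some} neighbourhood $U$ and compactness hypotheses enter elsewhere in the paper.
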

\begin{proof}
  Since $\omega$ is $\phi$--invariant, one has that $di_{X_\phi}\omega
  = \lie{X_\phi}\omega = 0$.  For the time being, let $U$ be any
  tubular neighbourhood of $P_k$, where $\omega$ is defined.  The
  closed $1$--form $i_{X_\phi}\omega$ represents a class in $H^1(U)$
  which vanishes if we pull it back to the zero section $P_k$: Given
  that $H^1(U) \cong H^1(P_k)$, it follows that $i_{X_\phi}\omega$ is
  exact on $U$, i.e., there exists a function $\mu_\phi$ such that
  $i_{X_\phi}\omega = d\mu_\phi$.  The function $\mu_\phi$ is uniquely
  defined up to an additive constant (which we may choose such that
  $\mu_\phi \equiv 0$ on $P_k$) and is $\S^1\times G$--invariant.
  
  Recall that a function $f:\, M\to \R$ is called Morse-Bott if
  $\Crit(f)$ is a submanifold of $M$ and $T_x\Crit(f) = \ker
  \Hessian_x (f)$ for all $x\in\Crit(f)$, where $\Hessian_x(f): \,
  T_xM \to T_xM$ denotes the Hessian of $f$ at the point $x$.

  In the case we are considering $\Crit(\mu_\phi) = P_k$: in fact, if
  $v\in P_k$, $X_\phi(v) = 0$, and hence $d\mu_\phi = i_{X_\phi}\omega
  = 0$.  Conversely, since $X_{\phi}$ always lies in the fibre of
  $\nu_k$ and $\omega$ restricts to a symplectic form there,
  $d\mu_\phi = i_{X_\phi}\omega$ can only vanish if $X_\phi=0$.  It is
  easy to show that if $v\in P_k$, the inclusion $T_v \Crit (\mu_\phi)
  \leq \ker \Hessian_v(\mu_\phi)$ holds.

  To see that equality holds one needs to show that $\dim\ker
  \Hessian_v (\mu_\phi) \leq \dim P_k$ or, equivalently, that
  $\rank\Hessian_v \mu_\phi$ is at least equal to the rank of $\nu_k$.
  Restricting $\omega$, $\phi$ and $\mu_\phi$ to one fibre of $\nu_k$
  we are in a proper symplectic situation and we may conclude that
  $\left.\mu_\phi\right|_{\nu_k(x)}$ is Morse (see
  \cite[Section~5.5]{McDuffSalamonIntro}), hence in particular it has
  full rank.  Introducing bundle coordinates on $U$ and computing the
  matrix of second derivatives of $\mu_\phi$, which represent the
  Hessian in these coordinates, we see that it always contains a non
  singular block, corresponding to the above restriction of $\mu_\phi$
  to one fibre, having rank equal to the rank of $\nu_k$.  This proves
  that $\mu_\phi$ is Morse-Bott, and it only remains to show that it
  is positive outside $P_k$.

  Let $\partial_r$ be the radial vector field on $\nu_k$ given by
  \begin{equation*}
    \partial_r(v) = \left.\frac{d}{dt}\right|_{t=1} t\cdot v
  \end{equation*}
  for $v\in \nu_k$.  We will show that $\mu_\phi$ strictly increases
  in radial direction, more precisely that $\lie{\partial_r} \mu_\phi
  \ge 0$ in $U$ (possibly after shrinking $U$) with equality only at
  the zero section.  By definition of $\mu_\phi$, one has
  $i_{\partial_r} d\mu_\phi = \omega (X_\phi, \partial_r)$, so it will
  suffice to show that there exists a neighbourhood of $P_k$ where
  $\omega(X_\phi, \partial_r) \ge 0$.  With $\pi$ denoting the bundle
  projection $\nu_k\to P_k$, the vertical bundle $V(\nu_k)$ of $\nu_k$
  can be identified with the pull-back
  \begin{equation*}
    \pi^* \nu_k = \bigl\{ (v,w) \in \nu_k \times \nu_k \bigm| \,
    \pi(v) = \pi(w)\bigr\} \;.
  \end{equation*}
  The identification of $\pi^*(\nu_k)$ and $V(\nu_k)$ goes as follows
  \begin{equation*}
    \pi^*(\nu_k) \to V(\nu_k), \quad (v,w) \mapsto
    \left.\frac{d}{dt}\right|_{t=0} (v+ tw) \;.
  \end{equation*}
  Let $v\in \nu_k$, and write it as $v = v_1 + \dotsm + v_l$ with
  respect to the splitting $\nu_k = E_1 \oplus \dotsm \oplus E_l$.
  The vectors $X_\phi$ and $\partial_r$ lie in $V(\nu_k) \cong
  \pi^*(\nu_k)$ and they are given by
  \begin{equation*}
    X_\phi(v) = (v, a_1 J v_1 + \dotsm + a_lJv_l) \quad \text{ and }\quad
    \partial_r(v) = (v,v) \;.
  \end{equation*}
  Let $w$ be another vector in $\nu_k$ with $\pi(v)=\pi(w)$,
  $\norm{w}=1$, and $w=w_1 + \dotsm + w_l$ with respect to the
  splitting $\nu_k = E_1 \oplus \dotsm \oplus E_l$.  We will show that
  if $v$ lies in the zero section of $\nu_k$, then one has
  \begin{equation}\label{eqn: positivity}
    \omega \Bigl((v,\sum_{j=1}^l a_j Jw_j), (v,w)\Bigr)>0 \;.
  \end{equation}
  Then by continuity there exists a neighbourhood of the zero section
  of $\nu_k$ where this holds for all $\norm{w}=1$ and therefore, by
  scaling, for all $w\neq 0$.  Hence in particular $\omega(X_\phi,
  \partial_r) \geq 0$ on this neighbourhood, with equality only at the
  zero section.  In order to prove \eqref{eqn: positivity}, recall that
  at the zero section of $\nu_k$ the differential of $\exp$ is the
  identity and the presymplectic form $\omega_P$ is left unchanged by
  averaging.  So it follows that
  \begin{equation*}
    \omega \Bigl((v,\sum_{j=1}^l a_j Jw_j), (v,w)\Bigr)
    =\omega\Bigl(\sum_{j=1}^l a_j Jw_j, w\Bigr)=
    \sum_{j=1}^l a_j\, \omega\bigl(Jw_j, w_j\bigr) > 0 \;,
  \end{equation*} 
  since the eigenspaces $E_j$'s are $\omega$--orthogonal.
\end{proof}

\subsection{Surgery along the minimal stratum}

As in the previous section, consider the minimal singular stratum
$P_k$, denote by $\nu_k$ its normal bundle in $P$ and take now the
product $\nu_k\times \C$.  We can extend the original $\beta$-- and
$G$--action to this product by letting $\S^1$ and $G$ act trivially on
the $\C$--factor, namely, for $v\in \nu_k(x)$, $w\in\C$
\begin{equation*}
  \lambda *_\beta (v,w) :=(\lambda *_\beta v, w) \qquad
  \textrm{and} \qquad g *_G (v,w) :=(g *_G v, w)\;.
\end{equation*}
We can define a second circle action on $\nu_k\times \C$ by setting
\begin{equation*}
  \lambda *_\phi (v,w) = (\lambda *_\phi v, \lambda^{-k}w)  =
  \bigl((\lambda^{a_1}v_1+ \dotsb +\lambda^{a_l}v_l),
  \lambda^{-k}w\bigr) \;,
\end{equation*}
where $v=v_1+\dotsb +v_l$ is the splitting with respect to the
eigenspaces defined above.  The $\beta$-- and $\phi$--actions commute
and therefore we can combine them and define a new $\S^1$--action
\begin{equation*}
  \lambda *_\tau (v,w) :=\lambda *_\phi \left(\lambda^{-1} *_\beta
    (v,w)\right) \;.
\end{equation*}
This $\tau$--action is not effective, because the $\phi$-- and the
$\beta$--action coincide for elements in $\Z_k$.  But consider the
short exact sequence
\begin{equation*}
  0\to \Z_k\to\S^1\to \hat\S^1 \to 0 \;,
\end{equation*}
with the homomorphism of the circle given by $\lambda\mapsto
\lambda^k$, and let $\hat{\S}^1$ act on $\nu_k\times \C$ by
$\sigma*_{\hat\tau} (v,w)=\lambda*_\tau (v,w)$ for some $\lambda\in
\S^1$ such that $\lambda^k=\sigma$.  This new action, which we denote
by $\hat{\tau}$, is not only effective but even free and the quotient
$(\nu_k\times \C)/\hat{\tau}$ is a smooth manifold.  Since $\phi$ and
$G$ commute with $\hat{\tau}$ and with each other, they descend to
this quotient, inducing an $\S^1\times G$--action, that is again
locally free outside the set $\bigl\{[v,0] \bigm|\, v\in P_k\bigr\}$.
To see this, assume there is $[v,w]\in (\nu_k\times\C)/\hat{\tau}$ and
a sequence $\bigl\{(\lambda_n,g_n)\bigr\}$ in $\S^1\times G$,
converging to the identity element $(1,e)$, such that
$(\lambda_n,g_n)*_{\phi\times G} [v,w]=[v,w]$ for all $n\in \N$.  This
means that for each $n$ there exists a unique element $\sigma_n\in
\S^1$ such that $\sigma_n\to 1$ and
\begin{equation}\label{equation:locally_free}
  (\lambda_n,g_n)*_{\phi\times G}(v,w)=\sigma_n*_{\hat{\tau}}(v,w) \;.
\end{equation}
By definition of the $\hat{\tau}$--action, $\sigma_n*_{\hat{\tau}}
(v,w) = \mu_n*_\tau(v,w)$ for some sequence $\{\mu_n\in\S^1\}$ such
that $\mu_n^k=\sigma_n$ and $\mu_n\to 1$. Dropping the $w$--coordinate
and projecting onto the zero section of $\nu_k$ we obtain
$g_n*_G\pi(v)=\mu_n*_\beta\pi(v)$, that is, $(\mu_n^{-1},g_n) \in
\stab_{\beta\times G}(\pi(v))$. Since the $\beta\times G$--action is
locally free, it follows that $\mu_n=1$ and $g_n=e$ for all $n$
sufficiently large.  We can now rewrite
Equation~\eqref{equation:locally_free} as $\lambda_n*_\phi(v,w)=
(v,w)$ and from this conclude that either $\lambda=1$ or $(v,w)$ has
to lie in $P_k\times\{0\}$.

We define a $2$--form $\Omega = (\omega, -i\,dw\wedge d\bar{w})$ on
$\nu_k\times \C$, which is invariant with respect to the
$\hat\tau$--action.  By construction, the infinitesimal generator of
this action can be written as $X_{\hat\tau} =
\frac{1}{k}(-X_{\beta}+X_{\phi})$.  The ``Hamiltonian'' for the
$\hat{\tau}$--action, given by
\begin{equation*}
  H_{\hat{\tau}} (v,w) = \frac{1}{k}\mu_\phi(v) - \abs{w}^2 \;,
\end{equation*}
satisfies $i_{X_{\hat\tau}}\Omega = dH_{\hat{\tau}}$.  It follows that
if we restrict to a regular level set of $H_{\hat{\tau}}$, say
$H_{\hat{\tau}}^{-1}(\epsilon/k)$, we have $i_{X_{\hat\tau}}\Omega=0$.
In other words, on such a level set the generator of the
$\hat\tau$--action is contained in the kernel of the $2$--form.  In
fact, this kernel contains the subspace spanned by the infinitesimal
generators of the $G$--, $\phi$--, and $\hat{\tau}$--actions.  For an
arbitrary skew--symmetric two--form $\omega$ on a vector space $V$ and
a linear subspace $W\leq V$, one has
\begin{equation*}
  \dim W+\dim W^{\omega}=\dim V+\dim(W\cap \ker\omega) \;.
\end{equation*}
With $V=T(\nu_k\times\C)$ and $W=TH_{\hat{\tau}}^{-1}(\epsilon/k)$,
using that $\ker\Omega$ is spanned by the generators of the $G$-- and
$\phi$--actions, this formula gives $\dim W^\Omega =\dim G+2$, which
implies that on $H_{\hat{\tau}}^{-1}(\epsilon/k)$ the kernel of
$\left.\Omega\right|_{W}$ coincides with the subspace spanned by the
infinitesimal generators of the $G$--, $\phi$--, and
$\hat{\tau}$--actions.  Hence the quotient $P_{\epsilon} :=
H_{\phi}^{-1}(\epsilon/k)/\hat{\tau}$, with the structure induced by
$\Omega$, $\phi$, and $G$ is a smooth presymplectic $\S^1\times
G$--manifold.

Notice that $H_{\hat{\tau}}^{-1}(\epsilon/k)$ can be written as the
disjoint union of two $\hat{\tau}$--invariant manifolds
\begin{equation*}
  H_{\hat{\tau}}^{-1}(\epsilon/k)= \left\{(v,w)\,\left|\,
      \mu_\phi(v) > \epsilon,\ \abs{w}^2 =
      \frac{\mu_\phi(v)-\epsilon}{k}\right. \right\}\sqcup
  \bigl\{(v,0)\,\bigm|\,\mu_\phi(v)=\epsilon \bigr\} \;.
\end{equation*}
Choose $\delta >0$ such that $\mu_\phi^{-1}(\delta)$ is contained in
$U$, the neighbourhood of $P_k$ constructed in Proposition~\ref{propo:
  Morse Bott}.  Notice that $\mu_\phi^{-1}(\delta)$ has the structure
of a sphere bundle over~$P_k$.  For $0<\epsilon<\delta$, denote by
$\nu_k(\epsilon)$ the subset of $\nu_k$ given by $\{\mu_\phi(v) <
\epsilon\}$, by $\nu_k(\epsilon, \delta)$ the ``annulus'' $\bigl\{
v\in\nu_k \bigm|\, \epsilon < \mu_\phi(v) < \delta \bigr\}$, and
consider the map
\begin{equation*}
  \Phi:\, \nu_k(\epsilon, \delta) \to P_{\epsilon}\ ,
  \qquad v\mapsto 
  \left[v, \sqrt{\frac{\mu_\phi(v)-\epsilon}{k}}\right] \;.
\end{equation*}
This is an $\S^1\times G$--equivariant diffeomorphism (onto its
image), where the $\S^1$--action is the $\beta$--action on $\nu_k$ and
the $\phi$--action on $P_{\epsilon}$.  Its inverse can be constructed
as follows: given $[v,w]$ with $w\neq 0$, we first represent the same
class by an element $(v', w')$ such that $w'$ is a real positive
number, and then define
\begin{equation*}
  \Phi^{-1}([v,w]):= v' \;.
\end{equation*}
Moreover, since $\Phi$ factors through the map $\nu_k(\epsilon,
\delta) \hookrightarrow H_{\hat{\tau}}^{-1}(\epsilon/k)$ which is the
identity in the first component and a real function in the second one,
we have
\begin{equation*}
  \Phi^*\Omega = \Phi^*(\omega, -i\,dw\wedge d\bar{w})=\omega \;,
\end{equation*}
hence $\Phi$ gives in fact an equivariant presymplectic
identification of $\nu_k(\epsilon, \delta)$ with its image under
$\Phi$.  More precisely we have
\begin{equation*}
  \Phi(\nu_k(\epsilon, \delta))  =\bigl\{[v,w]\in P_{\epsilon}\,\bigm|\:
      \epsilon<\mu_\phi(v) < \delta \bigr\}  \;.
\end{equation*}
We can now remove a tubular $\epsilon$--neighbourhood of $P_k$ in
$\nu_k$ and glue in the smooth manifold
\begin{equation*}
  V(\delta):=
  \left.\left\{(v,w)\,\left|\:\epsilon\leq\mu_\phi(v) < \delta,\,
        \abs{w}^2=\frac{\mu_\phi(v)-\epsilon}{k}
      \right.\right\} \right/\hat{\tau}
\end{equation*}
along the open ``collar'' $\nu_k(\epsilon, \delta)$, using the map
$\Phi$.  In this way we define the new manifold
\begin{equation*}
  \widetilde{P} =\bigl(P-\overline{\nu_k(\epsilon)}\bigr)\bigcup_{\Phi} 
  V(\delta) \;.
\end{equation*}
Since $\Phi$ is equivariant, the $\beta$--action on
$P-\overline{\nu_k(\epsilon)}$ and the $\phi$--action on $V(\delta)$
fit together to give a circle action $\tilde{\beta}$ on
$\widetilde{P}$, which by construction coincides with $\beta$ outside
a $\delta$--neighbourhood of $P_k$.  Similarly, the $G$--actions can
be combined to define a $G$--action on $\widetilde{P}$.  Moreover,
$\Phi$ identifies the given closed $2$--forms on the two sides of the
gluing, so $\widetilde{P}$ also admits a closed $2$--form
$\widetilde{\omega}$ with the property that $\widetilde{\omega}
=\omega_P$ on $P-\nu_k(\delta)$.  With the action of $\S^1\times G$
and the $2$--form $\widetilde{\omega}$ just defined, $\widetilde{P}$
is a presymplectic manifold.

Moreover, there exists a map $f:\,\widetilde{P}/\tilde{\beta}\to
P/\beta$, which is a $G$--equivariant presymplectic orbifold
isomorphism outside an arbitrarily small neighbourhood of $P_k$ (and
in fact coincides with the identity map outside a slightly larger
neighbourhood).  We shall describe how to define $f$.  On
$\bigl(P-\overline{\nu_k(\delta)}\bigr)/\beta$ it is simply the
identity.  In order to define it on $V(\delta)/\phi$ a little more
work is required.  First of all, denote by $S_{\epsilon}$ the quotient
$\bigl\{(v,0)\in H_{\hat{\tau}}^{-1} (\epsilon)\,\bigm|\,
\mu_\phi(v)=\epsilon \bigr\}/\hat\tau$.  Then the inverse of the
gluing map $\Phi$ gives us a diffeomorphism $\Phi^{-1}:\,
V(\delta)-S_{\epsilon}\to \nu_k(\epsilon, \delta)$.  Since $\Phi$ is
equivariant with respect to the $\phi\times G$-- and $\beta\times
G$--actions, this descends to a $G$--equivariant presymplectic
orbifold isomorphism $\bigl(V(\delta) - S_\epsilon\bigr)/\phi\to
\nu(\epsilon, \delta)/\beta$.  Let $h:[0, \infty)\to [0, \infty)$ be a
smooth monotone real function, which satisfies the following three
conditions:
\begin{itemize}
\item[(i)] $h(t)=0$ for all $t\leq \epsilon$;
\item[(ii)] $h$ is strictly increasing on $(\epsilon,\delta)$;
\item[(iii)] $h(t)=1$ for all $t\geq \delta$.
\end{itemize} 
For $v\in \nu_k(\epsilon, \delta)$, define
\begin{equation*}
  \widetilde{S}:\, \nu_k(\epsilon, \delta)\to \nu_k(\delta)-P_k\ ,
\qquad v\mapsto h\bigl(\mu_\phi(v)\bigr)\cdot v \;.
\end{equation*}
The map $\widetilde{S}$ is $\beta\times G$--equivariant so it descends
to a well-defined $G$--equivariant map
\begin{equation*}
S: \nu_k(\epsilon, \delta)/\beta\to
(\nu_k(\delta)-P_k)/\beta \;.
\end{equation*}

The composition of $\Phi^{-1}$ with the ``stretching'' map $S$ yields
a map
\begin{equation*}
  f:\;(V(\delta)-S_{\epsilon})/\phi\to (\nu_k(\delta)-P_k)/\beta\;.
\end{equation*}
Because of the boundary conditions on $h$, the map $f$ glues on the
outer side with the identity map on $(P-\nu_k(\delta))/\beta$. On the
inner side we extend it as the map $S_{\epsilon}/\phi\to P_k/\beta$,
which sends the $\phi$--orbit of $[v,0]$ to the $\beta$--orbit of
$\pi(v)$ ($\pi$ being the projection of $\nu_k$ to its zero section).
To see that $f$ is continuous in a neighbourhood of $S_\epsilon/\phi$,
one has to show that for any sequence $[v_k,w_k]/\phi \subset
V(\delta)/\phi$ that converges to some element $[v,0]/\phi$, it
follows that $f\bigl([v_k,w_k]/\phi\bigr)$ converges to $f
\bigl([v,0]/\phi\bigr)$.  We can find representatives
$(v_k^\prime,w_k^\prime) \in H_{\hat{\tau}}^{-1}(\epsilon/k)$ for the
sequence that converge to $(v^\prime,0)$, and hence
$f\bigl([v_k^\prime,w_k^\prime]/\phi\bigr) = \bigl(h
\bigl(\mu_\phi(v_k^\prime) \bigr)\cdot v_k^\prime\bigr)/\beta$
converges to $\pi(v^\prime) /\beta = \pi(v)/\beta$.

\section{Resolution of symplectic cuts}

The methods described above can also be applied in some situations to
find resolutions of symplectic cuts, thus giving a (positive) answer to a
question posed by River Chiang and Eugene Lerman.

First note that the construction in the proof of
Lemma~\ref{equivariant_circle_resolution} can also be carried out so
as to preserve an additional $K$--action.  If $(P,\omega)$ is a
presymplectic $\S^1\times G$--manifold as in
Lemma~\ref{equivariant_circle_resolution} with an additional
$K$--action that preserves $\omega$ and commutes with $\S^1\times G$,
then for an arbitrarily small neighborhood $U$ of the singular set of
the circle action we find a presymplectic $\S^1\times G$--manifold
$(\widetilde P,\widetilde\omega)$ with a $K$--action such that the
$\S^1$--action is free, and such that there is a map $p:\,\widetilde
P/\S^1 \to P/\S^1$ that is a $G$--equivariant presymplectic resolution
of $P/\S^1$ supported on $U$.  The projection $p$ is also
$K$--equivariant, and if the initial $K$--action was ``Hamiltonian''
with a moment map $\mu_K$ such that $\iota_{Y_M}\omega = d
\pairing{\mu_K}{Y}$ for every $Y \in\mathfrak{k}$, then we will also
recover a moment map $\widetilde\mu_K$ on the resolution that
coincides with $\mu_K$ outside the neighborhood $U$.

To simplify the presentation in the proof, we chose not to include
such a $K$--action: With this additional ingredient, most steps of the
proof just go through in a straightforward way, but let us remark that
in Proposition~\ref{propo: Morse Bott}, the function $\mu_\phi$ is
$K$--invariant, because
\begin{equation*}
  d\bigl(\lie{Y_P}\mu_\phi\bigr) = \lie{Y_P}
  \bigl(\iota_{X_\phi}\omega\bigr) = 0\;,
\end{equation*}
for every infinitesimal generator $Y_P$ associated to $Y
\in\mathfrak{k}$.  It follows that $\lie{Y_P}\mu_\phi =
\mathrm{const}$, and since $Y_P$ is tangent to the stratum $P_k$, and
$\mu_\phi$ vanishes on $P_k$, we have $\lie{Y_P}\mu_\phi = 0$.  For
the proof of Proposition~\ref{propo: presymplectic neighbourhood
  theorem} in the appendix, average $\alpha$ also over the group $K$,
and show that $X_s$ commutes with the generators of $K$ in the same
way as it was done for the $G$--action.  The moment map $\widetilde
\mu_K$ on the patch $V(\delta)$ is given by projection onto the
first factor, followed by the original moment map $\mu_K$.

\begin{theorem}
  Let $(M,\omega)$ be a Hamiltonian $\S^1$--manifold with Hamiltonian
  function $h:\, M\to \R$.  Assume that $a\in h(M)$ is a regular value
  of $h$.  Then we can perform the symplectic cut at $a$ to obtain the
  symplectic orbifolds
  \begin{equation*}
    M_{(-\infty,a]} \quad\text{ and }\quad M_{[a,\infty)}\;.
  \end{equation*}
  For any $\epsilon>0$, there are symplectic resolutions of both
  orbifolds by symplectic $\S^1$--manifolds $(\widetilde
  M_-,\omega_-)$ and $(\widetilde M_+,\omega_+)$ with Hamiltonians
  $\widetilde H_-$ and $\widetilde H_+$, respectively, such that
  $h^{-1}(-\infty,a-\epsilon)$ is isomorphic to $\widetilde
  H_-^{-1}(-\infty,a-\epsilon)$ and $h^{-1}(a+\epsilon,\infty)$ to
  $\widetilde H_+^{-1}(a+\epsilon,\infty)$.
\end{theorem}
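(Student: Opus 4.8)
The idea is to present each symplectic cut as the quotient of a presymplectic circle manifold that also carries an auxiliary Hamiltonian circle action, and then to feed this data into the $K$--equivariant version of Lemma~\ref{equivariant_circle_resolution} discussed at the start of this section. I will describe the construction for $M_{(-\infty,a]}$; the space $M_{[a,\infty)}$ is handled by the same argument applied to $-h$.

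Following the symplectic cut construction, form the product $M\times\C$ with the closed $2$--form $\Omega=\bigl(\omega,-i\,dw\wedge d\bar w\bigr)$ and let the \emph{cut circle} act by $c*(m,w)=(c\cdot m,\,cw)$, combining the given action on $M$ with rotation of the $\C$--factor. This action is Hamiltonian with moment map $\Phi(m,w)=h(m)+\abs{w}^2$, and since $a$ is a regular value of $h$ it is a regular value of $\Phi$ as well; as there are no fixed points of the $\S^1$--action on $h^{-1}(a)$, the cut circle acts locally freely on $N:=\Phi^{-1}(a)$. Thus $N$ is a smooth presymplectic $\S^1$--manifold whose kernel is spanned by the generator of the cut circle, and $M_{(-\infty,a]}=N/\S^1$. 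I would then take as auxiliary $K=\S^1$--action the given action on $M$ extended trivially to the $\C$--factor, $\mu*(m,w)=(\mu\cdot m,w)$: it commutes with the cut circle, preserves $\Omega$ and $\Phi$, and is Hamiltonian with moment map the restriction of $h$. A short gauge--fixing computation, normalising each orbit with $w\neq0$ to its representative with $w>0$ real, shows that on $\{w\neq0\}$, where $h<a$, this $K$--action descends to the \emph{original} $\S^1$--action on $M$ with Hamiltonian exactly $h$; in particular $M_{(-\infty,a]}$ restricted to the region $\{h<a\}$ is isomorphic, as a Hamiltonian $\S^1$--manifold, to $h^{-1}(-\infty,a)$.

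The orbifold singularities of $M_{(-\infty,a]}$ lie over the points $(m,0)\in N$ with $h(m)=a$ and nontrivial stabiliser $\stab_{\S^1}(m)\cong\Z_\ell$, and these are precisely the singular points of the cut circle on $N$. I would now apply the $K$--equivariant form of Lemma~\ref{equivariant_circle_resolution} to $N$, with $G$ trivial and $K$ the auxiliary circle, choosing the support neighbourhood $U$ of the singular set small enough that $h>a-\epsilon$ on it (note that $h=a-\abs{w}^2$ on $N$, so $h$ is close to $a$ near the singular locus). This yields a presymplectic $\S^1$--manifold $\widetilde N$ on which the cut circle acts \emph{freely}, together with a commuting $K$--action, a $K$--moment map $\widetilde H_-$ agreeing with $h$ off $U$, and a presymplectic resolution $\widetilde N/\S^1\to N/\S^1=M_{(-\infty,a]}$ supported on the image of $U$. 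Since the cut circle is now free, the quotient $\widetilde M_-:=\widetilde N/\S^1$ is a genuine smooth symplectic manifold, the auxiliary action descends to a Hamiltonian $\S^1$--action with Hamiltonian $\widetilde H_-$, and $\widetilde M_-\to M_{(-\infty,a]}$ is a symplectic resolution. Because nothing was altered outside $U$ and $\widetilde H_-=h$ there, the sublevel set $\widetilde H_-^{-1}(-\infty,a-\epsilon)$ is identified with $h^{-1}(-\infty,a-\epsilon)$ as Hamiltonian $\S^1$--manifolds.

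The step I expect to demand the most care is the identification tying the abstract output of Lemma~\ref{equivariant_circle_resolution} to the geometric claim of the theorem: checking that the auxiliary $K$--action descends to the original action with Hamiltonian honestly equal to $h$ (rather than to its inverse or to $a-h$, which is what the naive $\C$--rotation would give), and confirming that the singular set of the cut circle on $N$ coincides with the orbifold locus of $M_{(-\infty,a]}$, so that resolving the former resolves the latter. Once these matchings are in place, freeness of the lifted cut circle is exactly what promotes the presymplectic resolution to the required smooth symplectic $\S^1$--manifold, and the support condition on $U$ transfers verbatim to the statement about sublevel sets.
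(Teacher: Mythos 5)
Your proposal is correct and follows essentially the same route as the paper: present the cut space as the $\S^1$--quotient of the presymplectic level set in $M\times\C$, take the auxiliary $K=\S^1$--action acting trivially on the $\C$--factor (whose descended Hamiltonian is checked to be $h$ itself on the region away from the cut hypersurface), and invoke the $K$--equivariant version of Lemma~\ref{equivariant_circle_resolution} with support in a neighbourhood where $\abs{h-a}<\epsilon$. The only cosmetic difference is that you work out $M_{(-\infty,a]}$ and reduce $M_{[a,\infty)}$ to it via $-h$, while the paper treats the positive cut directly using the action $e^{i\phi}*(p,z)=(e^{i\phi}p,e^{-i\phi}z)$ with $H_+=h-\abs{z}^2$; these are equivalent conventions.
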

\begin{proof}
  The symplectic cut is performed by taking the product manifold
  $M\times \C$ with symplectic form $\omega \oplus -i dz\wedge d\bar
  z$ and circle action $e^{i\phi} *(p,z) := (e^{i\phi}p,e^{i\phi}z)$
  or $e^{i\phi} *(p,z) := (e^{i\phi}p,e^{-i\phi}z)$, depending on
  whether one wants to produce $M_{(-\infty,a]}$ or $M_{[a,\infty)}$.
  In the first case, the Hamiltonian function will be $H_-(p,z) = H(p)
  + \abs{z}^2$, in the second it will be $H_+(p,z) = H(p) -
  \abs{z}^2$.  Taking the symplectic reduction at the value $a$ will
  yield the symplectic cut spaces.

  Let us now focus on the `positive' part of the cut,
  $M_{[a,\infty)}$, since the argument for the negative part is
  completely analogous.  There is a symplectic embedding of
  $h^{-1}(a+\epsilon,\infty)$ into $M_{[a,\infty)}$, defined by $p
  \mapsto \bigl[p, \sqrt{h(p)-a}\bigr]$: it is equivariant with
  respect to the circle action induced on $M_{[a,\infty)}$ by
  $e^{i\phi} *(p,z) := (e^{i\phi}p, z)$.  Notice that this action on
  the presymplectic manifold $H_+^{-1}(a)$ plays the role of the
  additional $K$--action mentioned above.
 
  The potential orbifold singularities of $M_{[a,\infty)}$ lie in the
  cut hypersurface $\bigl\{[p,0]\bigm|\, p \in h^{-1}(a)\bigr\}$.  Let
  $U_{\epsilon}$ be the neighbourhood of this set defined as the
  complement of the image of $\bigl\{(p,z) \bigm|\, h(p)\geq a +
  \epsilon, \abs{z}=\sqrt{h(p)-a}\bigr\}$.  Denote by $\widetilde M_+$
  the resolution of $M_{[a,\infty)}$ supported on $U_\epsilon$
  obtained with the method of the previous section and preserving the
  additional circle action (the $K$--action).  Then
  $h^{-1}(a+\epsilon,\infty)$ embeds into $\widetilde M_+$ in a
  symplectic and equivariant way.  By construction, the Hamiltonian
  for the $K$--action on $\widetilde M_+$ coincides with $h$ outside
  the neighbourhood $U_{\epsilon}$, so the image of
  this embedding coincides with $\widetilde
  H_+^{-1}(a+\epsilon,\infty)$.
\end{proof}

\appendix 

\section{Proof of Proposition~\ref{propo: presymplectic neighbourhood
    theorem}}

The restrictions of $\omega$ and $\omega_P$ coincide along the zero
section of $\nu_k$.  In particular, if we denote by $i_0:\,
P_k\hookrightarrow \nu_k$ the inclusion of $P_k$ as the zero section
of $\nu_k$, we have
\begin{equation*}
  i_0^* (\omega -\omega_P) = 0 \;.
\end{equation*}
This implies that there exists a $1$--form $\alpha$ on $\nu_k$ such
that $\omega-\omega_P = d\alpha$ and moreover $\alpha$ vanishes on
$T_x \nu_k$ for every $x\in P_k$ (see for example
\cite[Theorem~6.8]{CannasSilva} or
\cite[Lemma~3.14]{McDuffSalamonIntro}).  Furthermore, we may assume
$\alpha$ to be $\beta$-- and $G$--invariant, because if it were not,
we could replace it by its average
\begin{equation*}
  \alpha^\prime = \int_{\S^1\times G} \left(g^*\lambda^*\alpha\right)
  \, d\lambda\,dg\;,
\end{equation*}
which still satisfies
\begin{align*}
  d\alpha^\prime = \int_{\S^1\times G} \bigl(g^*\lambda^*d\alpha\bigr)
  \,d\lambda\, dg = \int_{\S^1\times G}
  g^*\lambda^*(\omega-\omega_P)\,d\lambda\, dg = \omega - \omega_P
\end{align*}
and $\alpha^\prime = 0$ on points of $P_k$.  Let $Y$ be an arbitrary
element in the Lie algebra of $\S^1\times G$, and let $Y_P$ be its
infinitesimal generator.  Notice that from the $\S^1\times
G$--invariance, we also obtain that $i_{Y_P}\alpha = \mathrm{const}$,
but since $i_{Y_P}\alpha = 0$ on the zero section, it follows that
$Y_P$ lies everywhere in the kernel of $\alpha$.

Now define the $1$--parameter family of presymplectic $2$--forms
\begin{equation*}
  \omega_s := s\,\omega + (1-s)\,\omega_P \;.
\end{equation*}
Assume there exists a time-dependent vector field $X_s$ such that
$\omega = \left(\Phi_s^{X_s}\right)^*\omega_s$, where $\Phi_s^{X_s}$
denotes the flow of $X_s$.  Then we have
\begin{equation*}
  0 = \frac{d}{ds} \left(\Phi_s^{X_s}\right)^*\omega_s =
  \left(\Phi_s^{X_s}\right)^* \Bigl(\lie{X_s}\omega_s +
  \frac{d}{ds}\omega_s\Bigr)
\end{equation*}
which is equivalent to $\lie{X_s}\omega_s + d\alpha = 0$, or
\begin{equation*}
  d\bigl(i_{X_s}\omega_s + \alpha\bigr) = 0\;.
\end{equation*}
In order to show that $\omega$ and $\omega_P$ are $\S^1\times
G$--equivariantly isomorphic, we need to find $X_s$ satisfying the
last equation, integrating to a time--$1$ flow in a neighbourhood of
$P_k$ and commuting with~$\S^1\times G$.

Let $X_s$ be the unique vector field determined by
\begin{equation*}
  \begin{cases}
    g(Y_P, X_s) = 0 \Leftrightarrow X_s\in\Omega^P =
    (\ker\omega_P)^\perp\\
    \left.\bigl(i_{X_s}\omega_s + \alpha\bigr)\right|_{\Omega^P} = 0
  \end{cases}
\end{equation*}
for any infinitesimal generator $Y_P$ for the $\S^1\times G$--action.
It is easy to see that $i_{Y_P}\bigl(i_{X_s}\omega_s + \alpha\bigr) =
0$, so it follows that $i_{X_s}\omega_s+\alpha = 0$ on the whole
tangent space and not just on $\Omega^P$.  To see that $X_s$ commutes
with $X_\beta$, compute
\begin{align*}
  0 &= \lie{Y_P} \bigl(i_{X_s}\omega_s + \alpha\bigr) =
  i_{\lie{Y_P}X_s}\omega_s \intertext{and} 0 &=
  \lie{Y_P}\bigl(g(X_s,Y_P)\bigr) = g(Y_P, \lie{Y_P}X_s) \;.
\end{align*}
Combining this with the fact that $\omega_s$ is nondegenerate on
$\Omega^P$ we get that $\lie{Y_P}X_s = 0$.

Finally notice that, since $\alpha$ vanishes at all points of the zero
section of $\nu_k$, $X_s$ also does and hence its flow is defined in a
neighbourhood of this section up to time one.

\bibliographystyle{amsalpha}

\bibliography{main}


\end{document}